\def\PP{{\mathcal P}}
\def\QQ{{\mathcal Q}}
\def\CC{{\mathcal C}}
\newtheorem{theorem}{Theorem}
\theoremstyle{plain}
\newtheorem{corollary}{Corollary}
\newtheorem{lemma}{Lemma}
\newtheorem{proposition}{Proposition}
\begin{document}
\pdfoutput=1
\title{The Edge-connectivity of Token Graphs}
\Large

\maketitle

\begin{center}
{\textbf{J. Lea\~nos $^1$ \hspace{1cm}
        M. K. Christophe Ndjatchi$^{2}$}}

\end{center}
\author{
}

\footnote{  Unidad Acad\'emica de Matem\'aticas, Universidad Aut\'onoma de Zacatecas, M\'exico.
             Email: jleanos@matematicas.reduaz.mx       \\

             $^{2}$Department of Physics and Mathematics, Instituto Polit\'ecnico Nacional, UPIIZ,
  P.C. 098160, Zacatecas, M\'exico. Email: mndjatchi@ipn.mx }

\begin{abstract}
Let $G$ be a simple graph of order $n\geq 2$ and let $k\in \{1,\ldots ,n-1\}$. The $k$-token graph $F_k(G)$ of $G$ is the graph whose vertices are the $k$-subsets of $V(G)$, where two vertices are adjacent in $F_k(G)$ whenever their symmetric difference is an edge of $G$. In 2018 J. Lea\~nos and A. L. Trujillo-Negrete proved that if $G$ is $t$-connected and $t\geq k$, then  $F_k(G)$ is at least $k(t-k+1)$-connected. In this paper we show that such a lower bound remains true in the context of edge-connectivity. Specifically, we show that if $G$ is $t$-edge-connected and $t\geq k$, then  $F_k(G)$ is at least
$k(t-k+1)$-edge-connected. We also provide some families of graphs attaining this bound.
\end{abstract}
\keywords{ Token graphs \and edge-connectivity \and Menger's theorem}

{\it AMS Subject Classification Numbers:} 05C40

\section{Introduction}
\label{intro}

Throughout this paper, $G$ is a simple finite graph of order $n\geq 2$ and $k\in \{1,\ldots ,n-1\}$. The $k$-\textit{token graph} $F_k(G)$ of $G$ is the graph whose vertices are all the $k$-subsets of $V(G)$ and two
$k$-subsets are adjacent whenever their symme\-tric difference is a pair of adjacent vertices in $G$. In particular, note that
$F_k(G)$ is isomorphic to $F_{n-k}(G)$. In this paper we often write token graph instead of $k$-token graph. See Figure \ref{fig:token} for an example.\\

\begin{figure}[ht]
\begin{center}
\includegraphics[width=0.8\textwidth]{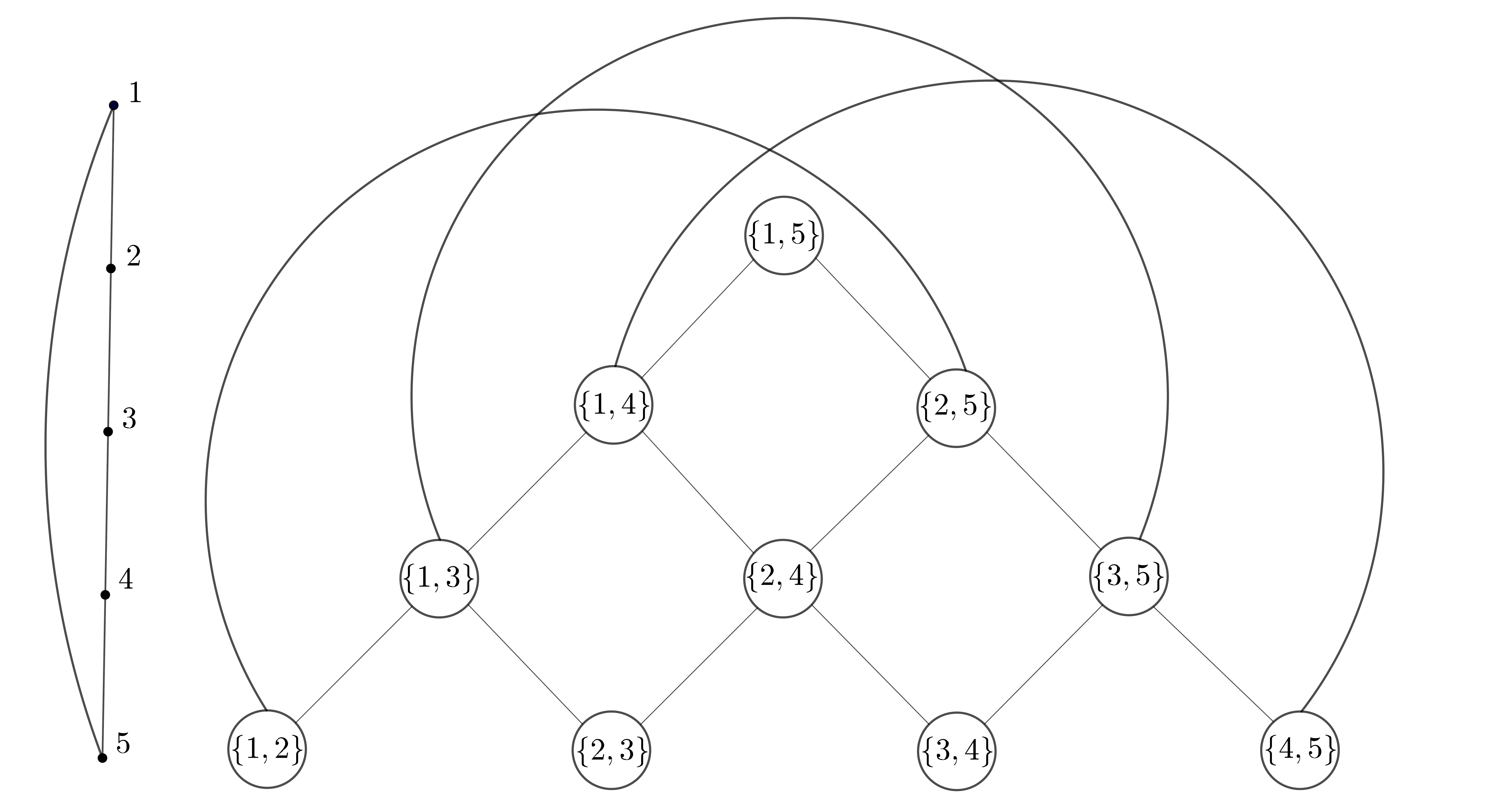}
\caption{\small The graph on the left is the cycle $C_5$, and the graph on the right is $F_2(C_5)$. For instance, note that the symmetric difference of the vertices $\{1,2\}$ and $\{2,3\}$ of $F_2(C_5)$ is $\{1,3\}$, but they are not adjacent in $F_2(C_5)$ because $1$ and $3$ are not adjacent in
$C_5$.}
\label{fig:token}
\end{center}
\end{figure}

As far as we know, the notion of token graphs was introduced by Y. Alavi, M. Behzad, P. Erd{\H{o}}s, and D. R. Lick in 1991 \cite{alavi2}, where the $2$-token graphs were called {\em double vertex graphs}. The next time that this concept appeared in the literature was in 2002, when T. Rudolph in its arXiv preprint ``Cons\-tructing physically intuitive graph invariants" used $F_2(G)$ to study the graph isomorphism problem \cite{rudolph}.  In such a work were exhibited some examples of non-isomorphic graphs $G$ and $H$ which are cospectral, but with $F_2(G)$ and $F_2(H)$ non-cospectral. He emphasized that fact by making the following remark about the eigenvalues of $F_2(G)$:  ``{\em then the eigenvalues of this larger matrix are a graph invariant, and in fact are a more powerful invariant than those of the original matrix $G$}".

In 2007 \cite{aude}, the notion of token graphs was extended by K. Audenaert, C. Godsil, G. Royle, and T. Rudolph  to any integer $k\in\{1,\ldots, n-1\}$, and in such a paper $F_k(G)$ was called the {\em symmetric $k$-$th$ power of $G$}. It was proved in \cite{aude} that the $2$-token graphs of strongly regular graphs with the same parameters are cospectral. In addition, some connections with generic exchange Hamiltonians in quantum mechanics were also discussed. Following Rudolph's study,  A. Barghi, and I. Ponomarenko \cite{barghi}; and A. Alzaga, R. Iglesias, and  R. Pignol  \cite{alzaga} proved, independently, that for a given positive integer $m$ there exists infinitely many pairs of non-isomorphic graphs with cospectral $m$-token graphs.

In 2012 \cite{FFHH}, R. Fabila-Monroy, D. Flores-Pe{\~n}aloza, C. Huemer, F. Hurtado, J. Urrutia,  and D. R. Wood   reintroduced, independently, the concept of $k$-token graphs as ``{\em a model in which $k$ indistinguishable tokens move from vertex to vertex along the edges of a graph}".  From this new way of defining $F_k(G)$ is not hard to see that the study of the structure of $F_k(G)$ is intimately related to the class of problems known as {\em reconfiguration problems}. For more on reconfiguration problems, see for example \cite{auletta,caline,demaine,ratner,miller,vanden,yama}.
On the other hand, most of the results reported in \cite{FFHH} address the following classical appro\-ach: Given a graph $G$ and a graph invariant $\eta$; What can we say about $\eta(F_k(G))$, when $\eta(G)$ is known? In  \cite{FFHH} were studied the cases in which the graph invariant is the (vertex-) conne\-ctivity, the diameter, the clique number, the chromatic number, etc. This approach has been continued  in several investigations (see, e.g., \cite{alba,token2,gomez,leanos,mira}).

The $k$-token graphs also are a generalization of Johnson graphs: if $G$ is the complete graph of order $n$, then $F_k(G)$ is isomorphic to the
Johnson graph $J(n,k)$. The Johnson graphs have been widely studied and the analysis of many of its combinatorial properties is an active area of research~\cite{alavi,brouwer,riyono,etzion,terwilliger}.

Concerning the connectivity of token graphs, in \cite{FFHH} were given some families of graphs of order $n$ with connectivity exactly $t$, and whose
$k$-token graphs have connectivity exactly $k(t-k+1)$, whenever $k \leq t$. In that paper was conjectured that it is best possible. More formally, they conjectured that if $G$ is $t$-connected and $k \leq t$, then $F_k(G)$ is at least  $k(t-k+1)$-connected. In 2018 \cite{leanos}, J. Lea\~nos and A. L. Trujillo-Negrete proved this conjecture. In the present paper we are interested in the edge-connectivity  of $F_k(G)$.

As usual, if $H$ is a graph, then we shall use $\kappa(H)$, $\lambda(H)$, and $\delta(H)$ to denote, respectively, the {\em connectivity, edge-connectivity,} and  {\em minimum degree} of $H$.  It is well known that any connected graph $H$ satisfies

 \begin{equation} \label{eq1}
\delta(H)\geq \lambda(H)\geq \kappa(H).
\end{equation}

  On the other hand, for any of these two inequalities, it is easy to find simple connected graphs whose value of the right side parameter is close to $1$ while the value of the left side parameter may be significantly longer. See Figure \ref{fig:intro1} for a couple of examples.
\begin{figure}[ht]
\begin{center}
\includegraphics[width=0.8\textwidth]{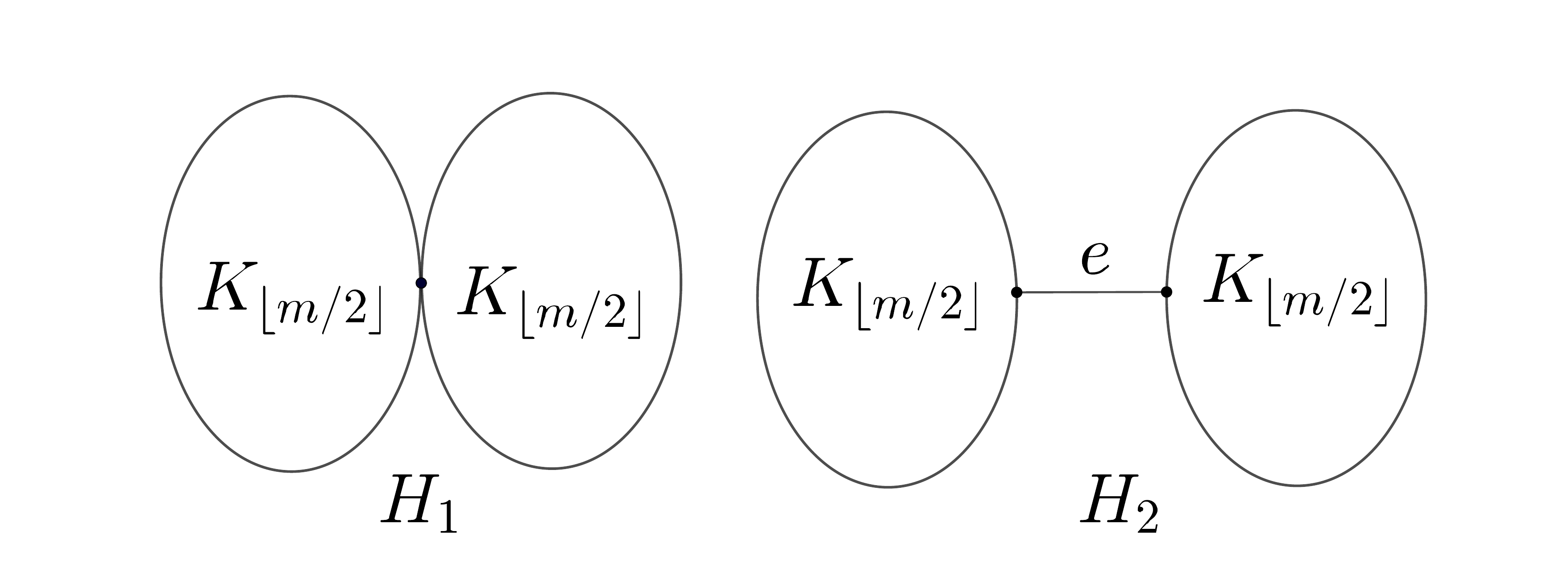}
\caption{\small Let $m\geq 2$ be a large integer and let $r:={\lfloor m/2 \rfloor}$.
On the left we have the graph $H_1$ constructed from two copies of $K_r$ by identifying a vertex of a copy with a vertex of the other copy. Then
$H_1$ has order $2r-1$ and $r-1=\lambda(H_1)\geq \kappa(H_1)=1$.
 The graph on the right is $H_2$ constructed by connecting two copies of $K_r$ by means of a new edge $e$. Then $H_2$ has order $2r$
and $r-1=\delta(H_2)\geq \lambda(H_2)=1$.
}
\label{fig:intro1}
\end{center}
\end{figure}
Our goal in this paper is to study the edge-connectivity of $F_k(G)$ for the case in which $k\leq \lambda(G)$, and
in Theorem \ref{thm:main} we report our main result, which, as we shall see in Section \ref{sec:Examples}, is best possible.

On the other hand, since at the begining of this work we were aware  of the following lower bound for $\lambda(F_k(G))$
due to \cite{leanos} and (\ref{eq1}):

\begin{equation} \label{eq2}
\lambda(F_k(G))  \geq \kappa(F_k(G)) \geq k(\kappa(G)-k+1).
\end{equation}

Since the difference between $\kappa(G)$ and $\lambda(G)$ can be
 significantly large, we remark that the bound of $\lambda(F_k(G))$ provided by Theorem \ref{thm:main} can be significantly better than those in
 (\ref{eq2}). In particular, note that if $G$ is connected, then the bound in Theorem \ref{thm:main} depends only on the edge-connectivity of
 $G$ but not on the (vertex-) connectivity of $G$, which makes the study of $\lambda(F_k(G))$ interesting in its own right.

\begin{theorem}\label{thm:main} Let $G$ be a connected graph of
order $n$, and let $k$ be a positive integer such that $k \leq n-1$ and
$k \leq \lambda(G)$. Then $\lambda(F_k(G))\geq k(\lambda(G)-k+1)$.
\end{theorem}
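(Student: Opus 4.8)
The plan is to argue through the edge version of Menger's theorem. Writing $t=\lambda(G)$, recall that $\lambda(F_k(G))=\min_{A\neq B}\lambda_{F_k(G)}(A,B)$, where $\lambda_{F_k(G)}(A,B)$ denotes the maximum number of pairwise edge-disjoint $A$--$B$ paths; so it suffices to produce, for every pair of distinct $k$-subsets $A,B$, a family of $k(t-k+1)$ pairwise edge-disjoint paths joining them in $F_k(G)$. As a preliminary sanity check one verifies the matching degree bound $\delta(F_k(G))\ge k(t-k+1)$: the degree of a $k$-set $A$ in $F_k(G)$ equals the number of edges of $G$ with exactly one end in $A$, which is at least $k\,\delta(G)-k(k-1)\ge k(t-k+1)$ because $\delta(G)\ge \lambda(G)=t$ by \eqref{eq1}. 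This both confirms the bound is not vacuous and pins down what the hardest ``local'' transitions will look like.

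First I would reduce to the case in which $A$ and $B$ differ in a single token. The local edge-connectivity function satisfies the ultrametric inequality
\[
\lambda_{H}(u,w)\ \ge\ \min\{\lambda_{H}(u,v),\ \lambda_{H}(v,w)\},
\]
since a minimum $u$--$w$ edge cut must separate $v$ from one of $u,w$. Because the Johnson graph $J(n,k)$ is connected, any two $k$-subsets $A,B$ are joined by a sequence $A=X_0,X_1,\dots,X_m=B$ of $k$-subsets in which consecutive terms differ in exactly one element; iterating the inequality in $H=F_k(G)$ gives $\lambda_{F_k(G)}(A,B)\ge\min_i\lambda_{F_k(G)}(X_i,X_{i+1})$. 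Hence it is enough to prove $\lambda_{F_k(G)}(X,X')\ge k(t-k+1)$ for $X=C\cup\{x\}$ and $X'=C\cup\{x'\}$ with $|C|=k-1$ and $x,x'\notin C$.

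For such a pair I would build the $k(t-k+1)$ paths in $k$ batches, indexed by which of the $k$ tokens of $X$ finally occupies $x'$: in one batch the token at $x$ travels to $x'$ while the $k-1$ tokens of $C$ act as spectators, and in the batch associated with $c_i\in C$ the token at $c_i$ is relayed to $x'$ while the token at $x$ fills the vacated site $c_i$. Each batch is obtained by lifting a system of $t$ pairwise edge-disjoint paths of $G$ (which exist because $\lambda_G(x,x')\ge t$, respectively $\lambda_G(c_i,x')\ge t$) to token moves in $F_k(G)$, the aim being to extract $t-k+1$ edge-disjoint $F_k(G)$-paths from each batch, so that the $k$ batches together yield $k(t-k+1)$ paths. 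Two further points must be checked: within a batch the lifted paths are edge-disjoint, and paths from different batches never repeat a labelled transition, i.e. never reuse the same stationary $(k-1)$-set together with the same edge of $G$ (special care is needed for the final transitions, which all enter the common vertex $X'$).

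The main obstacle is precisely this lifting. One is tempted to keep the $k-1$ spectator tokens fixed and slide the travelling token through $G$ with those vertices deleted; but removing $k-1$ vertices from a $t$-edge-connected graph can drop the local edge-connectivity well below $t-k+1$---for instance, deleting the shared vertex of two triangles glued at a point disconnects their two outer edges---so a static board does not suffice. This is exactly the phenomenon that makes the edge-connectivity statement harder than its vertex-connectivity counterpart in \cite{leanos}, where deleting a vertex from a $t$-connected graph leaves a $(t-1)$-connected graph, so that an induction on $k$ through $F_{k-1}(G-v)$ and $F_k(G-v)$ goes through smoothly. Here, instead, the spectator tokens must be displaced through the very vertices the travelling token needs and then restored, and the heart of the argument is a deterministic displacement rule together with the verification that no two of the $k(t-k+1)$ constructed paths ever induce the same transition of $F_k(G)$.
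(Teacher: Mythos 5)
Your proposal is a plan, not a proof: the step you yourself call ``the heart of the argument'' --- a deterministic displacement rule for the spectator tokens together with the verification that no two of the $k(t-k+1)$ lifted paths repeat a transition of $F_k(G)$ --- is exactly the content of the paper's Lemma~\ref{lem:horse}, and it is left entirely unproven. The difficulty is not mere bookkeeping. Menger's theorem gives you edge-disjointness \emph{within} each of your $k$ batches (the $t$ paths from $x$ to $x'$, and the $t$ paths from $c_i$ to $x'$), but nothing at all \emph{across} batches: a $c_i$--$x'$ path and a $c_j$--$x'$ path of $G$ may share many edges, and since all $k(t-k+1)$ of your paths terminate at $X'$, their final moves are all of the form ``slide a token along an edge of $G$ into $x'$ with stationary set $C$.'' If two batches approach $x'$ through the same neighbor of $x'$, they produce literally the same edge of $F_k(G)$. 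Ensuring that the $k$ path systems can be chosen (or surgically modified) so that the lifted paths are pairwise edge-disjoint is the real theorem, and your text acknowledges rather than supplies it.

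You also stop your reduction one step too early, which makes the unproven part strictly harder than it needs to be. The cut inequality $\lambda_H(u,w)\geq\min\{\lambda_H(u,v),\lambda_H(v,w)\}$ that you iterate along a path of the Johnson graph can just as well be iterated along a path of $F_k(G)$ itself (which is connected), reducing the problem to \emph{adjacent} pairs $X,Y$ of $F_k(G)$; this is the paper's Proposition~\ref{prop:adjacents}. In that case the two differing tokens $x,y$ are adjacent in $G$, and the single crossing edge $xy$ organizes the whole construction: each spectator $z_i$ contributes short length-$3$ paths (move the token at $z_i$ to a neighbor $u_{i,j}$ outside $X\cup Y$, slide the token at $x$ across the edge to $y$, move the displaced token back), contributing at least $(k-1)(t-k+1)-\ell$ paths, and only \emph{one} Menger system of $t$ edge-disjoint $x$--$y$ paths of $G$ must be lifted, after a local surgery that installs the length-$2$ paths $xz y$ for spectators $z$ adjacent to both $x$ and $y$ and discards the at most $k-1-\ell$ ``bad'' paths whose first or last internal vertex is a spectator; this yields at least $t-k+1+\ell$ further paths, and edge-disjointness between the two families is immediate because their internal vertices contain $x$ and $y$ in incompatible patterns. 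Your formulation, with $x$ and $x'$ nonadjacent and $k$ full path systems to reconcile, is precisely the version the paper's reduction is designed to avoid, and as written the proposal does not close it.
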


Next corollary is a direct consequence of Inequality \ref{eq1} and Theorem \ref{thm:main}.

\begin{corollary}\label{thm:example}
 Let $G$ be a connected graph of order $n$, and let $k$ be a positive integer such that $k \leq n-1$ and
$k \leq \lambda(G)$. If $\delta(F_k(G))= k(\lambda(G)-k+1)$ then $\lambda(F_k(G))=\delta(F_k(G))$.
\end{corollary}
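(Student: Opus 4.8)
The plan is to prove the statement by a simple squeeze (sandwich) argument, chaining the general inequality (\ref{eq1}) with the lower bound supplied by Theorem \ref{thm:main}. First I would observe that since $G$ is connected and $1 \le k \le n-1$, the token graph $F_k(G)$ is itself connected, so that Inequality (\ref{eq1}) may be applied to $H = F_k(G)$; this yields $\delta(F_k(G)) \ge \lambda(F_k(G))$. Next, Theorem \ref{thm:main} gives $\lambda(F_k(G)) \ge k(\lambda(G)-k+1)$. Combining these two facts produces the chain
\[
\delta(F_k(G)) \;\ge\; \lambda(F_k(G)) \;\ge\; k(\lambda(G)-k+1).
\]

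Under the hypothesis $\delta(F_k(G)) = k(\lambda(G)-k+1)$, the leftmost and rightmost quantities in this chain coincide, and therefore every inequality in between must collapse to an equality. In particular $\delta(F_k(G)) = \lambda(F_k(G))$, which is precisely the asserted conclusion.

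Since this is a direct corollary, there is essentially no obstacle to overcome: the entire argument is the observation that a quantity squeezed between two equal bounds must equal both of them. The only point meriting a word of care is the applicability of (\ref{eq1}), which requires $F_k(G)$ to be connected; this is immediate from the connectedness of $G$ together with $k \le n-1$, so the corollary follows at once from Theorem \ref{thm:main} and (\ref{eq1}).
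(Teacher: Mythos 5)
Your proposal is correct and is exactly the argument the paper intends: the corollary is stated there as a direct consequence of Inequality (\ref{eq1}) applied to $F_k(G)$ together with the lower bound of Theorem \ref{thm:main}, i.e.\ the same squeeze $\delta(F_k(G)) \ge \lambda(F_k(G)) \ge k(\lambda(G)-k+1)$. Your side remark on connectedness of $F_k(G)$ is also fine (one may either cite the known fact that $F_k(G)$ is connected whenever $G$ is, or note that Theorem \ref{thm:main} already forces $\lambda(F_k(G)) \ge k \ge 1$).
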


Our approach in this paper is a refinement of some ideas and techniques introduced in \cite{FFHH} and  \cite{leanos}. As we shall see in
Sections  \ref{sec:auxililar} and \ref{sec:proof}, such a combination has resulted  in an improvement of our understanding over the structure of the paths of $F_k(G)$. In particular, we remark that the proof that we present here is significantly shorter than those in \cite{leanos}.

The rest of this paper is organized as follows. In Section \ref{sec:auxililar} we give some tools and well known facts which will be used in the proof of our main result. The proof of Theorem \ref{thm:main} is given in Section \ref{sec:proof}. Finally, in Section \ref{sec:Examples} we shall provide two families of graphs which
satisfy the hypothesis of Corollary \ref{thm:example}, and hence attain the lower bound in Theorem \ref{thm:main}. Additionally, we also give an example that  shows that the hypothesis $\lambda(G)\geq k$ in Corollary \ref{thm:example} is necessary.

\section{Paths of $F_k(G)$ that comes from the concatenation of paths of $G$}\label{sec:auxililar}

In this paper we use the definition of $F_k(G)$ given in \cite{FFHH}. Hence, we  think of a vertex of $F_k(G)$ as a configuration of $k$ tokens placed at
$k$ distinct vertices of $G$, where two of such configurations are adjacent in $F_k(G)$ if and only if one configuration can be reached from the other
 by moving one token along an edge of $G$ from its current vertex to an unoccupied vertex.

Our aim in this section is to present some basic facts which will be used in the next section. In particular, using a technique introduced in
 \cite{FFHH}, we explain how to produce paths in $F_k(G)$ by moving tokens along certain paths of $G$. As we will see, the existence of such paths in $F_k(G)$ and Menger's Theorem for edge-connectivity are the heart of the proof of Theorem \ref{thm:main}. We start by giving some concepts and notations.

 Let $H$ be a simple connected graph. We recall that if $u$ and $v$ are vertices of $H$, then a $u-v$ {\em path} is a path of $H$ with ends $u$ and $v$.
Let $P$ be a path of $H$ of length at least one. If $u$ and $v$ are vertices of $P$, then we shall use $u\overrightarrow{P}v$ to denote the only directed subpath of $P$ which goes from $u$ to $v$. If $P=uv$, then we write $\overrightarrow{uv}$ instead of  $u\overrightarrow{P}v$.

Now we give some notation introduced in \cite{FFHH} with slight adaptations.
Let $X$ be a $k$-set of $V(G)$. If $P$ is an $x-y$ path in $G$ directed from $x$ to $y$ such that $x\in X$ and $y\notin X$, then we say that $P$ is an
{\em admissible} path for $X$.

Let $P$ be an $x-y$ path of $G$ admissible for $X$, and let
$Y:=(X\setminus \{x\})\cup \{y\}$. Clearly, $X$ and $Y$ are distinct vertices of $F_k(G)$. Now  suppose that $X\cap P=\{v_1,\ldots ,v_q\}$ ordered by
$P$ (although not necessarily consecutive in $P$), where $v_1 = x$. We shall use $X\underset{xPy}\longrightarrow Y$ to denote the $X-Y$ path in $F_k(G)$
directed from $X$ to $Y$ corresponding to the following sequence of token moves: First move the token at $v_q$ along $P$ to $y$, then for
$i =q-1,q-2,\ldots,1$ move the token at $v_i$ along $P$ to $v_{i+1}$.

For an example, let us consider the cycle $C_5$ and $F_2(C_5)$ in Figure \ref{fig:intro1}. Let $Q$ be the $2-3$-path of $C_5$ defined as
$Q:=2,1,5,4,3$. Clearly, $Q$ is admissible  for $X=\{2,4\}$. Then
$x=2, y=3, Y=\{3,4\}, q=2, v_2=4$, and $v_1=2$. Then,
$$\QQ:=X\underset{xQy}\longrightarrow Y=\{2,4\} \underset{4,3}\longrightarrow \{2,3\} \underset{2,1}\longrightarrow \{1,3\}  \underset{1,5}\longrightarrow \{5,3\}  \underset{5,4}\longrightarrow \{4,3\}.$$

Note that when the token at $v_i$ goes from $v_i$ to  $v_{i+1}$, there are no other tokens in $v_iPv_{i+1}$.
 Thus these moves correspond to a path in $F_k(G)$, which starts in $X$ and ends in $Y$. Also note that each edge $X_1,X_2$ in
 $X\underset{xPy}\longrightarrow Y$ corresponds to an edge in $P$. More precisely, if $X_1$ and $X_2$ are adjacent vertices in
 $X\underset{xPy}\longrightarrow Y$, then their symmetric difference consists of two adjacent vertices of $P$, say $x_1\in X_1\setminus X_2$ and
 $x_2 \in X_2\setminus X_1$, which are adjacent in $P$ (for instance, the first edge $\{2,4\}, \{2,3\}$ of $\QQ$ corresponds naturally to the edge $4,3$ of $Q$). Then $X\underset{xPy}\longrightarrow Y$ and $P$ have the same length.

Now suppose that $X$ is a $k$-set of $V(G)$, that $P$ is an $x-y_i$ path of $G$ admissible for $X$, and that $Q$ is a $y_j-w$ path of $G$ admissible for  $Y:=(X\setminus \{x\})\cup \{y_i\}$. Let $W:=(Y\setminus \{y_j\})\cup \{w\}$. Then we will denote the concatenation of the paths $X\underset{x P y_i}\longrightarrow Y$ and $Y\underset{y_jP w}\longrightarrow W$ simply by
$$X\underset{x P y_i}\longrightarrow Y\underset{y_j P w}\longrightarrow W.$$ Note that such a concatenation is a directed walk of $F_k(G)$ which starts in $X$ and ends in $W$, where possibly $X=W$. The concatenation of more than 2 of such paths is defined and denoted analogously.

 The next proposition will be useful.

 \begin{proposition}\label{prop:adjacents}
 Let $H$ be a simple connected graph. Then $H$ is $t$-edge-co\-nnected if and only if for each edge $e=uv$ of $H$ there are $t$ edge-disjoint $u-v$ paths.
\end{proposition}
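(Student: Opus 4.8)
The plan is to deduce both implications from Menger's theorem for edge-connectivity in its local form, which I will invoke as known: for any two distinct vertices $u,v$ of a connected graph $H$, the maximum number of pairwise edge-disjoint $u$-$v$ paths equals the minimum number of edges whose deletion separates $u$ from $v$. With this in hand, the forward direction is essentially immediate and the converse reduces to a short counting argument on a minimum edge cut.

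For the forward implication, assume $H$ is $t$-edge-connected, so $\lambda(H)\geq t$, and let $e=uv$ be any edge of $H$. I would first observe that every set of edges separating $u$ from $v$ is in particular an edge cut of $H$ (its deletion leaves $u$ and $v$ in distinct components, hence disconnects $H$), and therefore has size at least $\lambda(H)\geq t$. Applying Menger's theorem to the pair $u,v$ then yields at least $t$ edge-disjoint $u$-$v$ paths, as required. Note that one of these may be the trivial path consisting of the edge $e$ itself, which causes no difficulty.

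For the converse I would argue by contradiction. Suppose every edge admits $t$ edge-disjoint paths between its endpoints, yet $\lambda(H)<t$. Choose a minimum edge cut $F$, so $|F|=\lambda(H)<t$, let $A$ be the vertex set of one component of $H-F$, and set $B:=V(H)\setminus A$. The first step is the structural observation that every edge of $H$ with one end in $A$ and the other in $B$ must lie in $F$ (otherwise it would survive in $H-F$ and reconnect $A$ to $B$); since $H$ is connected and both $A,B$ are nonempty, at least one such crossing edge $e=uv$ exists, with $u\in A$ and $v\in B$. The main point, and the step I expect to require the most care, is the crossing argument applied to the $t$ edge-disjoint $u$-$v$ paths guaranteed by hypothesis: each such path begins in $A$ and ends in $B$, so each uses at least one edge between $A$ and $B$, and being pairwise edge-disjoint they use $t$ distinct such edges. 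All of these edges lie in $F$, forcing $t\leq |F|=\lambda(H)<t$, a contradiction. The delicate bookkeeping is precisely guaranteeing that the minimum cut $F$ contains a genuine crossing edge to which the hypothesis can be applied, together with the edge-disjointness that converts $t$ paths into $t$ distinct cut edges; once these are in place the contradiction, and hence $\lambda(H)\geq t$, follows at once.
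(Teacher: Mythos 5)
Your proof is correct and follows essentially the same route as the paper's: the forward direction is read off from Menger's theorem for edge-connectivity, and the converse counts, for an edge across a minimum edge cut, the distinct cut edges forced by the $t$ edge-disjoint paths between its endpoints. Your only deviation is cosmetic --- you phrase the converse as a contradiction and explicitly select a crossing edge $uv$ with $u\in A$, $v\in B$, which is in fact slightly more careful than the paper's choice of an arbitrary edge of the cut, since it makes transparent why the cut separates the endpoints of the chosen edge.
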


\begin{proof} The forward implication follows immediately from Menger's Theorem for edge-connectivity. Conversely, suppose that for each edge
$uv$ of $H$ there are $t$ edge-disjoint $u-v$ paths, and consider a minimum cut edge $C$ of $H$. Then, it is enough to show that $C$ has at least
$t$ edges. Since $H$ is connected, then $C$ has at least one edge, say $e=xy$. But by hypothesis  $H$ has at least $t$ edge-disjoint $x-y$ paths.
Because $C$ is a cut edge which separates $x$ from $y$, then each of such $x-y$ paths contains at least one edge of $C$, and hence $C$ has at
least $t$ edges.
\end{proof}

We end this section by showing that the lower bound given in Theorem \ref{thm:main} is in fact a lower bound for the minimum degree of $F_k(G)$.

\begin{proposition}\label{prop:min-degree}  Let $k,t,n$ be integers such that $2\leq k\leq t\leq n-1$.
Let $G$ be a $t$-edge-connected simple graph of order $n$. Then $\delta(F_k(G))\geq k(t-k+1)$.
\end{proposition}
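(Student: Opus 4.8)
The plan is to bound the degree of an arbitrary vertex of $F_k(G)$ from below, since $\delta(F_k(G))$ is by definition the minimum such degree. Fix a $k$-subset $X\subseteq V(G)$, regarded as a vertex of $F_k(G)$. First I would identify the neighbours of $X$: by the definition of $F_k(G)$, a $k$-set $Y$ is adjacent to $X$ precisely when the symmetric difference $X\triangle Y$ is an edge $uv$ of $G$; since $|X|=|Y|=k$, this forces $X\setminus Y=\{u\}$ and $Y\setminus X=\{v\}$ with $u\in X$, $v\notin X$, and $uv\in E(G)$. As $Y$ recovers this edge as $X\triangle Y$, the assignment $uv\mapsto (X\setminus\{u\})\cup\{v\}$ is a bijection between the edges of $G$ having exactly one endpoint in $X$ and the neighbours of $X$ in $F_k(G)$. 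Hence $\deg_{F_k(G)}(X)$ equals the number of edges of $G$ lying in the edge cut between $X$ and $V(G)\setminus X$.

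The remaining argument is a double-counting estimate. Writing $e(X)$ for the number of edges of $G$ with both endpoints in $X$, and $d$ for the number of edges with exactly one endpoint in $X$, counting edge-endpoints inside $X$ gives $\sum_{x\in X}\deg_G(x)=2e(X)+d$, so that $d=\sum_{x\in X}\deg_G(x)-2e(X)$. Because $G$ is $t$-edge-connected, inequality (\ref{eq1}) yields $\deg_G(x)\geq\delta(G)\geq\lambda(G)\geq t$ for every $x\in X$, whence $\sum_{x\in X}\deg_G(x)\geq kt$. Since $G$ is simple, $X$ spans at most $\binom{k}{2}$ edges, so $e(X)\leq\binom{k}{2}$. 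Substituting these two bounds gives
\[
d \;\geq\; kt-2\binom{k}{2}\;=\;kt-k(k-1)\;=\;k(t-k+1).
\]
As $X$ was an arbitrary vertex of $F_k(G)$, taking the minimum over all $k$-subsets yields $\delta(F_k(G))\geq k(t-k+1)$, as desired.

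I do not expect any genuine obstacle here: the entire content is the first step, namely recognizing that the degree of $X$ in $F_k(G)$ is exactly the size of the edge boundary of $X$ in $G$; once this identification is in place, the bound is forced by the degree-sum identity together with $\delta(G)\geq t$ and the trivial estimate $e(X)\leq\binom{k}{2}$. The hypotheses $k\leq t$ and $k\leq n-1$ mainly serve to make the statement meaningful (so that $k$-subsets exist and the bound is informative); they play no role in the estimate itself, which holds verbatim for every $k$.
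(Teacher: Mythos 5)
Your proof is correct and follows essentially the same route as the paper: both hinge on identifying $\deg_{F_k(G)}(X)$ with the size of the edge cut between $X$ and $V(G)\setminus X$, and then bounding that cut using $\deg_G(x)\geq t$ together with the fact that $X$ spans at most $\binom{k}{2}$ edges. The only cosmetic difference is that you count the cut globally via the degree-sum identity $\sum_{x\in X}\deg_G(x)=2e(X)+d$, while the paper counts per vertex (each $v\in X$ has at least $t-k+1$ neighbours outside $X$) --- the two estimates are arithmetically identical.
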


\begin{proof}
Let $G, k, n, t$ be as in the statement of Proposition \ref{prop:min-degree}. Let $X$ be a vertex of $F_k(G)$. From the $t$-edge-connectivity of $G$
we know that each vertex $v$ of $G$ has degree at least $t$. Then each $v\in X$ has at least $t-k+1$ neighbors in $G\setminus X$. Let $F$ be the set of edges of $G$ with an end in $X$ and the other end in $G\setminus X$. Then $|F|\geq k(t-k+1)$. From the definition of $F_k(G)$ it is easy to see that there is a one-to-one correspondence between the edges of $F$ and the edges of $F_k(G)$ that are incident with the vertex $X\in V(F_k(G))$. Then $\delta(F_k(G))\geq k(t-k+1)$, as required.
\end{proof}

\section{Proof of Theorem \ref{thm:main}}\label{sec:proof}

In view of Proposition \ref{prop:adjacents} and Menger's Theorem for edge-connectivity, in order to show Theorem \ref{thm:main}, it is enough to prove Lemma \ref{lem:horse}.  For brevity, in the rest of the section, if $m$ is a positive integer, then we use $[m]$ to denote $\{1,\ldots ,m\}$.

\begin{lemma}\label{lem:horse}
Let $G$ be a connected graph of order $n\geq 3$, and let $k$ be a positive integer such that $k \leq n-1$. If $X$ and $Y$ are two adjacent vertices of $F_k(G)$ and $k\leq \lambda(G)$, then  $F_k(G)$ has $k(\lambda(G)-k+1)$ edge-disjoint $X-Y$ paths.
\end{lemma}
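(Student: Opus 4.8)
The plan is to prove Lemma \ref{lem:horse} directly, since by Proposition \ref{prop:adjacents} this yields Theorem \ref{thm:main}. Write $t:=\lambda(G)$ and recall $k\le t$. Since $X$ and $Y$ are adjacent, their symmetric difference is an edge $ab$ of $G$, so $X=A\cup\{a\}$ and $Y=A\cup\{b\}$ for a common $(k-1)$-set $A=X\cap Y$ with $a,b\notin A$. I want to exhibit $k(t-k+1)$ pairwise edge-disjoint $X-Y$ paths in $F_k(G)$.

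The key tool is a refinement of the lifting described in Section \ref{sec:auxililar}: every edge of a lifted path $X\underset{xPy}{\longrightarrow}\cdots$ projects, via symmetric difference, to a well-defined edge of $G$, and together with the unchanged $(k-1)$ tokens (its \emph{background}) it is completely determined. Hence two lifted $F_k(G)$-edges coincide only if they share both the same $G$-edge and the same background. This gives an edge-disjointness criterion much weaker than edge-disjointness of the underlying $G$-walks: two lifts may freely reuse a $G$-edge provided their backgrounds differ whenever they do so. Exploiting this is what allows us to beat the bound $t$ coming from moving a single token and to reach $k(t-k+1)$.

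I would build the paths in two families. \emph{Primary family:} by Proposition \ref{prop:adjacents} applied to the edge $ab$, $G$ has $t$ edge-disjoint $a-b$ paths $R_1=ab,R_2,\dots,R_t$; lifting each by carrying the token at $a$ to $b$ (background $A$) yields $t$ paths that are pairwise edge-disjoint, since distinct $R_m$ share no $G$-edge. \emph{Secondary family:} for each $a_i\in A$ and each admissible neighbor $w\notin X\cup\{b\}$ of $a_i$, form the length-three path from $X$ to $(A\setminus\{a_i\})\cup\{a,w\}$ to $(A\setminus\{a_i\})\cup\{b,w\}$ to $Y$, which displaces $a_i$ to $w$, slides the token at $a$ across the edge $ab$, and returns the displaced token to $a_i$. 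Its middle edge reuses the $G$-edge $ab$ but with background $(A\setminus\{a_i\})\cup\{w\}\ne A$, and for distinct pairs $(i,w)$ these backgrounds are pairwise distinct; since each $a_i$ has at least $t-k$ such neighbors $w$ (the degree count behind Proposition \ref{prop:min-degree}), this produces $(k-1)(t-k)$ further paths, mutually edge-disjoint and, as only the trivial primary $R_1$ uses the $G$-edge $ab$, edge-disjoint from the primary interiors as well. The totals match: $t+(k-1)(t-k)=k(t-k+1)$.

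The main obstacle is the coordination at the two endpoints. Each path leaves $X$ by a single edge and enters $Y$ by a single edge, and to be edge-disjoint these incident edges must all be distinct. The secondary paths pin down the incident edges that move an $A$-token (the edge from $X$ to $(A\setminus\{a_i\})\cup\{a,w\}$, and symmetrically at $Y$), and since each $a_i$ has only about $t-k$ admissible neighbors the choice of these is essentially forced. Consequently the Menger family cannot be taken arbitrarily: the lift of $R_m$ begins by moving the token of $X$ nearest $b$ along $R_m$, so one must select the paths $R_1,\dots,R_t$ and the displacement neighbors $w$ together, ensuring that the first and last edges of each primary lift avoid all the displacement edges used by the secondary family. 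I expect this simultaneous choice, reconciling the Menger system with the forced displacement edges under a degree budget that leaves essentially no slack, to be the delicate point; once it is arranged, the background criterion renders all remaining interior edge-disjointness automatic.
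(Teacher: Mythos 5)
Your setup matches the paper's architecture: the same two families (lifts of a Menger system of $t$ edge-disjoint $x$--$y$ paths, plus length-three detours displacing a token of $Z:=X\cap Y$ and sliding the $x$-token across the edge $xy$), the same projection criterion (a lifted edge is determined by its underlying $G$-edge together with the positions of the stationary tokens), and the same target count $t+(k-1)(t-k)=k(t-k+1)$. But there is a genuine gap, and you have located it yourself without closing it: your final paragraph says you ``expect'' the simultaneous choice of Menger paths and displacement neighbors to be arrangeable, which is precisely the crux, and the arrangement you propose cannot work in general. First, you cannot always avoid Menger paths whose first or last internal vertex lies in $Z$: if $\deg(x)=t$ and some neighbor of $x$ is a $z_i\in Z$ not adjacent to $y$, every system of $t$ edge-disjoint $x$--$y$ paths uses the edge $xz_i$, so such a ``bad'' path is forced. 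Second, the conflicts are not confined to first and last edges of the primary lifts. If you lift a primary path $P$ by the cascade of Section \ref{sec:auxililar} and $z_i$ is the last token on $P$, the cascade moves the token at $z_i$ first, traversing its initial edge $z_iv$ with background exactly $X\setminus\{z_i\}$ --- which is identically the first edge of your secondary path for the pair $(i,v)$; a symmetric collision (background $Y\setminus\{z_i\}$) occurs when another token later enters $z_i$'s vacated position. A single primary path through $z_i$ can thus produce two collisions with the secondary family, while the degree slack at $z_i$ covers at most one drop, so local repair by pruning neighbors $w$ breaks the count.

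The paper resolves this with three devices you would need and do not have. (i) It first modifies the Menger system $N$ so that for every $z\in Z_{xy}$ (tokens adjacent to both $x$ and $y$) the length-two path $xzy$ belongs to $N$; these lift to length-two paths in $F_k(G)$ whose unique internal vertex contains both $x$ and $y$, so they clash with nothing. (ii) For each remaining good path $P_r$ it does \emph{not} cascade from $x$: it first parks the $x$-token at the first internal vertex $x_r$, then cascades only the subpath $x_rP_ry_r$, then moves the token at $y_r$ to $y$. This reordering guarantees that no internal vertex of the lift contains $x$ or $y$, whereas every internal vertex of a secondary path contains exactly one of them --- making the cross-family edge-disjointness automatic by internal-vertex disjointness, with no joint selection needed. (iii) Bad paths (first or last internal vertex in $Z\setminus Z_{xy}$) are simply discarded, and the count is rescued by a refinement absent from your uniform bound: a vertex $z_i\in Z\setminus Z_{xy}$ is adjacent to at most one of $x,y$, hence has $m_i\geq t-k+1$ admissible neighbors (not merely $t-k$), and since each such $z_i$ lies in at most one bad path, $|N_b|\leq k-1-\ell$, the surplus in the secondary family exactly compensates the discarded primaries. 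Without (i)--(iii) your two counts do not simultaneously hold, so the proposal as it stands does not prove the lemma.
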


\begin{proof} Let $G, k, n, \lambda(G), X$ and $Y$ as in the statement of Lemma \ref{lem:horse}. Since  Lemma \ref{lem:horse} holds trivially for $k = 1$ because $F_1(G)$ and $G$ are isomorphic, we assume that $k\geq 2$. For brevity, let $t:=\lambda(G)$.

Since $X$ and $Y$ are adjacent in $F_k(G)$, then the symmetric difference of $X$ and $Y$ consists precisely of two vertices $x, y\in V(G)$ which are
adjacent in $G$. We assume that $x\in X\setminus Y$, $y\in Y\setminus X$, and that $e$ is the edge of $G$ joining $x$ to $y$.
Then $X=\{x\}\cup Z$ and $Y=\{y\} \cup Z$, where $Z:=\{z_1,\ldots ,z_{k-1}\}$ is a subset of $k-1$ vertices of
$V(G)\setminus \{x,y\}$. See Figure \ref{fig:proof}.

For $i\in[k-1]$, let $u_{i,1}, \ldots u_{i,m_i}$ be the vertices of $G-(X\cup Y)$ that are adjacent to $z_i$ in $G$. Since $G$ is $t$-edge-connected, then
$\delta(G)\geq t$, and so $m_i\geq t-k\geq 0$. For  $i\in [k-1], m_i\geq 1$, and $j\in [m_i]$, let
\begin{align*}\label{first-paths}
Z^x_{i,j} & :=(X\setminus \{z_i\})\cup \{u_{i,j}\}, \\
 Z^y_{i,j} & :=(Y\setminus \{z_i\})\cup \{u_{i,j}\}, and \\
{\PP}_{i,j} & :=X\underset{z_iu_{i,j}}\longrightarrow Z^x_{i,j} \underset{xy}\longrightarrow Z^y_{i,j} \underset{u_{i,j}z_i} \longrightarrow Y.
\end{align*}

Let us define $$\CC_1 :=\{\PP_{i,j} |  i\in [k-1], m_i\geq 1,  \mbox{ and } j\in [m_i]\}.$$

 Note that  $\CC_1$ is a collection of $X-Y$ paths of $F_k(G)$ directed from $X$ to $Y$, all of length $3$, and that $|\CC_1|=m_1+\cdots + m_{k-1}$.

Now we show that the paths in $\CC_1$ are pairwise edge-disjoint.
Let $\PP_{i_1,j_1}$ and $\PP_{i_2,j_2}$  be two distinct paths of $\CC_1$. If $i_1\neq i_2$, then $z_{i_1}$ is the only element of $Z$ that every internal vertex of
 $\PP_{i_1,j_1}$ does not contain. Similarly, $z_{i_2}$ is the only element of $Z$ that every internal vertex of
 $\PP_{i_2,j_2}$ does not contain. Thus $\PP_{i_1,j_1}$ and  $\PP_{i_2,j_2}$ have no internal vertices in common, and hence they are edge-disjoint.  Then we can assume that $i_1=i_2$ and that $j_1\neq j_2$.  Let $i:=i_1=i_2$ and let $r\in \{j_1,j_2\}$. Note that
 if $U$ is any internal vertex of $\PP_{i,r}$, then $U\cap \{u_{i,j_1},u_{i,j_2}\}=\{u_{i,r}\}$. Then $\PP_{i,j_1}$ and  $\PP_{i,j_2}$ have no internal vertices in common.

From the $t$-edge-connectivity of $G$ and Menger's Theorem we know that $G$ has a collection $N=\{P_1,\ldots , P_t\}$ of $t$ pairwise edge-disjoint $x-y$ paths. Let us assume (without loss of generality) that every path in $N$ is directed from $x$ to $y$.

Let $Z_{xy}$ be the subset of $Z$ consisting of all vertices of $G$ that are adjacent to both $x$ and $y$ in $G$. Let $\ell:=|Z_{xy}|$.
Then $\ell\in \{0,\ldots, k-1\}$. For $z\in Z_{xy}$, let $P_{xzy}$ be the $x-y$ path of $G$ directed from
 $x$ to $y$ and defined by $xzy$. Let $N_{xzy}$ be the collection formed by such $\ell$ paths of length 2.
Now we show that by making slight modifications of $N$, if necessary, we can assume that every path of $N_{xzy}$ is in $N$.

Let $z\in Z_{xy}$. We analyze three cases separately.

{\sc Case 1.} Suppose that neither $xz$ nor $zy$ belongs to some path of $N$. Then $P_{xzy}$ is a directed $x-y$ path from $x$ to $y$ that does not belong to $N$. In such a case, we add the $P_{xzy}$ to $N$ and remove any  path $P_r$ of $N\setminus N_{xzy}$ in order to keep $N$ as a collection of $t$ paths. Note that such $P_r$ exists because $t > k-1\geq \ell$.

{\sc Case 2.} Suppose that exactly one of $xz$ or $zy$ belongs to some path of $N$, say $P_r$. Then we add the corresponding $P_{xzy}$ to $N$ and remove $P_r$ from it.

{\sc Case 3.} Suppose that both $xz$ and $zy$ are in paths of $N$. If they are in the same path, then $xzy=P_{xzy}\in N$ and we are done. Then we can assume that  $xz$ and $zy$ lie in different paths of $N$. Suppose that  $xz\in P_i$ and $zy\in P_j$ with $i\neq j$. In this case, note that $xzy$ and
$P'_{xzy}:=P_j\cup P_i\setminus \{xz,zy\}$ are two $x-y$ paths of $G$ directed from $x$ to $y$. As before, we can assume that $xzy$ and $P'_{xzy}$ are paths of $N$ instead of $P_i$ and $P_j$.

It is easy to see that none of the modifications in each of these three cases affects the edge-disjointness property of the $x-y$ paths of $N$.

 By performing a relabeling if necessary, let us assume that
$N=\{P_1,\ldots ,P_t\}$ and that $N\setminus N_{xzy}=\{P_{\ell+1}, \ldots, P_{t}\}$. We let $N_0$ denote the set of all paths of $N$ that are disjoint from
$Z$. For $P_r\in N_1:=N\setminus (N_{xzy} \cup N_0)$, let $x_r$ (respectively, $y_r$) be the first (respectively, last) internal vertex of $P_r$. Then $xx_r$ and
$y_ry$ are the first and last, respectively, edges of $P_r$. Since $P_r\in N_1$, then it has length at least $3$, and so $x_r\neq y_r$. If $P_r\in N_1$ is such that at least one of $x_r$ or $y_r$ belongs to $Z\setminus Z_{xy}$, then we say that $P_r$ is {\em bad}. Let us denote by $N_b$ the set of all bad paths of $N_1$, and let $N_g:=N_1\setminus N_b$. Since each vertex
of $Z\setminus Z_{xy}$ is adjacent to at most one of $x$ or $y$, and the paths of $N_1$ are pairwise edge-disjoint, then  each vertex of  $Z\setminus Z_{xy}$
 is contained in at most one bad path, and so $|N_b|\leq k-1-\ell$.

 We remark that $\{N_{xzy}, N_0, N_b, N_g\}$ is a partition of $N$. Let $n_0:= |N_0|, n_b:=|N_b|,$ and $n_g:=|N_g|$. Then $t=\ell+n_0+n_b+n_g$. Let us assume that the $n_g$ paths of $N$ with the greatest indices are the elements of $N_g$. More formally,  let us assume that $N_g=\{P_{t-n_g+1}, \ldots ,P_{t}\}$.

For every $P_r\in N_0\cup N_{xzy}$, let ${\PP}_{r}:=X\underset{xP_ry}\longrightarrow Y$. Now let us define $$\CC_{2}:=\{\PP_{r} | P_r\in N_0\cup N_{xzy}\}.$$

For every $P_r\in N_g$, let
\begin{align*}
X_r & :=(X\setminus \{x\})\cup \{x_r\}, \\
Y_r & :=(Y\setminus \{y\})\cup \{y_r\}, and\\
{\PP}_{r} & :=X\underset{xx_r}\longrightarrow X_r\underset{x_rP_ry_r}\longrightarrow Y_r \underset{y_ry}\longrightarrow Y.
\end{align*}
Finally, we define  $$\CC_g :=\{\PP_{r} | P_r\in N_g\}.$$

Clearly,  $\CC_{2}\cup \CC_g$ is a collection of $X-Y$ paths of $F_k(G)$ directed from $X$ to $Y$. Now we show
 that if $\PP_i$ and $\PP_j$ are distinct paths of $\CC_{2}\cup \CC_g$, then they are edge-disjoint.
   Suppose that $P_i$ and $P_j$ are the corresponding $x-y$ paths in $N$ of $\PP_i$ and $\PP_j$, respectively.
Seeking a contradiction, suppose that $\PP_i$ and  $\PP_j$ have a common edge, say $AB$, in $F_k(G)$.
The definition of $F_k(G)$ implies that $G$ must have two adjacent vertices $a$ and $b$ such that $A\setminus B=\{a\}$
and $B\setminus A=\{b\}$. Since $G$ is simple, then the only way to reach configuration $B$ from configuration $A$, in one step, is moving a token from $a$ to $b$ along the edge $f=ab$. Then the edge $f$ must belong to both $P_i$ and $P_j$, contradicting that $P_i$ and $P_j$ are edge-disjoint.

 Now we show that if $\PP_r\in \CC_{2}\cup \CC_g$ and $\PP_{i,j}\in \CC_1$ then $\PP_r$ and $\PP_{i,j}$ are edge-disjoint.
 Let $P_r$ be the underlying $x-y$ path of $\PP_r$ in $G$.
We start by noting that each internal vertex of $\PP_{i,j}$ contains exactly one of $x$ and $y$.
On the other hand, for $\PP_r$ we have the following:
\begin{itemize}
\item If $P_r\in N_0$, then neither $x$ nor $y$ belongs to any internal vertex of $\PP_r$.
\item If $P_r\in N_{xyz}$, then both $x, y$ belong to the unique internal vertex of $\PP_r$.
\item If $P_r\in N_{g}$, then neither $x$ nor $y$ belongs to any internal vertex of $\PP_r$.
\end{itemize}
These facts imply that  $\PP_r$ and $\PP_{i,j}$ have no internal vertices in common, and hence they are edge-disjoint.

In summary, we have proved that $\CC_1\cup \CC_2 \cup \CC_g$ is a collection of pairwise edge-disjoint $X-Y$ paths of $F_k(G)$ of cardinality
$\lambda:=m_1+\cdots +m_{k-1}+\ell+n_0+n_g$.

It remains to show that $\lambda\geq k(t-k+1)$.
The $t$-edge-connectivity of $G$ implies that $\delta(G)\geq t$. From the definition of $Z_{xy}$ we know
 that each $z\in Z\setminus Z_{xy}=\{z_{\ell+1}, \ldots , z_{k-1}\}$ is adjacent to at most one of $x$ or $y$. This implies that
$m_i\geq t-(k-1)$ whenever $i\in \{\ell+1, \ldots , k-1\}$.
Similarly, for $i\in \{1,\ldots ,\ell\}$ we have that $m_i\geq t-k$. Then, $$m_1+\cdots +m_{k-1}\geq \ell(t-k)+(k-1-\ell)(t-k+1)=(k-1)(t-k+1)-\ell.$$

On the other hand, by combining $t=\ell+n_0+n_b+n_g$ and $n_b\leq k-1-\ell$, we obtain $t-k+1\leq n_0+n_g$. Then,
$$\lambda\geq (k-1)(t-k+1)-\ell + \ell +t-k+1=k(t-k+1).$$
\end{proof}

\begin{figure}[ht]
\begin{center}
\includegraphics[width=0.6\textwidth]{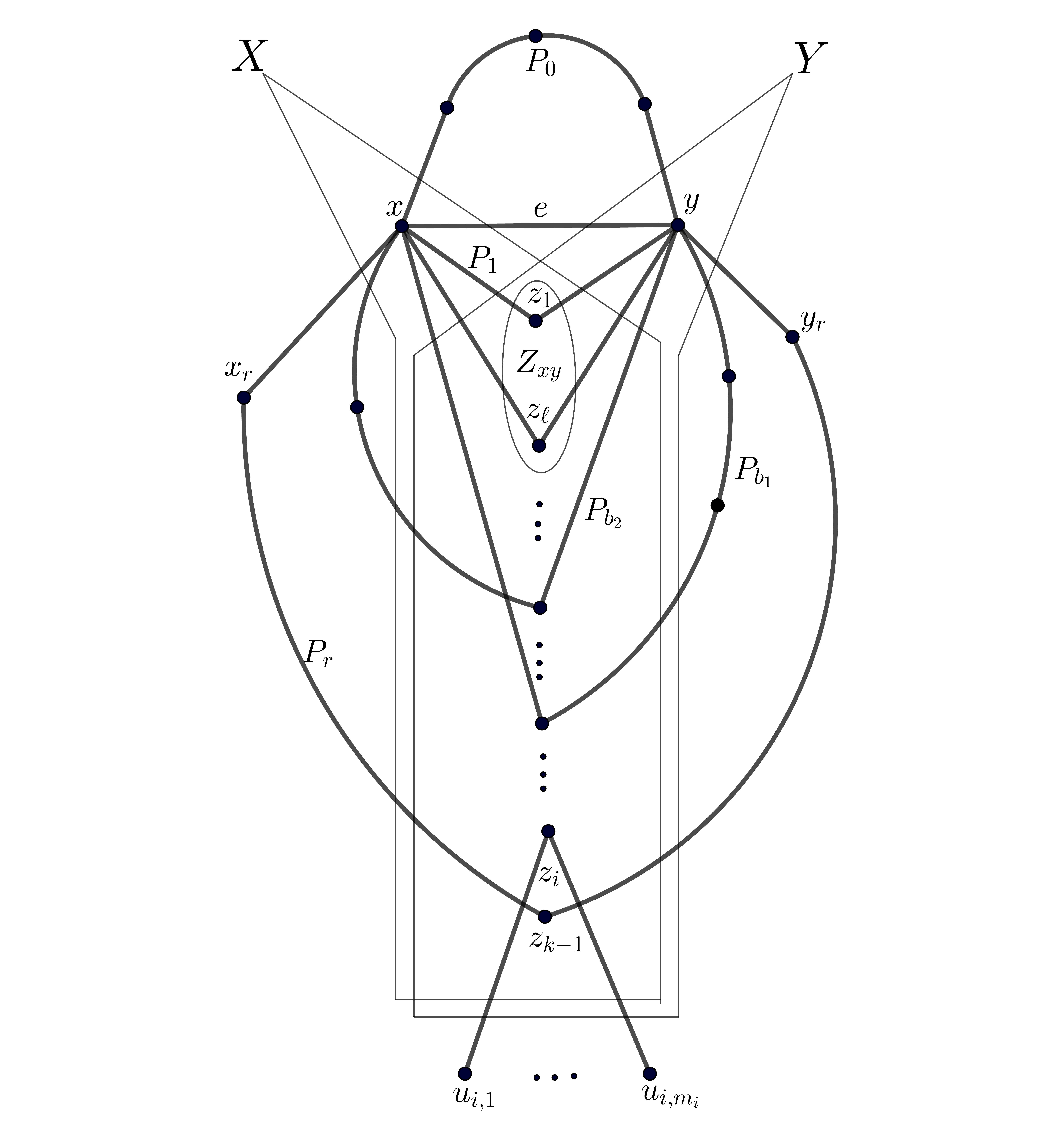}
\caption{\small Here we shown some of the subsets of $V(G)$ involved in the proof of Lemma \ref{lem:horse}. Every thick straight line segment represents an edge of $G$, and every thick convex arc is a path of $G$. Note that $P_0, P_1=xz_1y, P_{b_1}, P_{b_2},$ and $P_r$ form a set of $x-y$ pairwise edge-disjoint paths of $G$. The paths $P_{b_1}$
and $P_{b_2}$ are bad paths of $N_1$. Similarly, note that $P_0\in N_0, P_1\in N_{xzy},$ and $P_r\in N_g$.
}
\label{fig:proof}
\end{center}
\end{figure}


\section{Theorem \ref{thm:main} is best possible}\label{sec:Examples}

A couple of concrete examples satisfying the hypothesis of Corollary \ref{thm:example} are
the complete graph $G:=K_{t+1}$ and the graph $G:=K_t \equiv K_t$ that results by connecting  two copies of $K_t$ by means of a matching of size $t$.
Hence,  $\lambda(F_k(G))=k(t-k+1)$ for $2\leq k \leq t$ and $G\in \{K_{t+1}, K_t \equiv K_t\}$.

Finally, note that the graph $G$ on the left in Figure \ref{fig:graphG} has $\lambda(G)=1$ and $\delta(G)=3$, whereas its $2$-token graph $F_2(G)$ (on the right) has $\lambda(F_2(G))=3$ and $\delta(F_2(G))=4$. The last two values disagree with the conclusion of Corollary \ref{thm:main}. Such a discrepancy reveals that the hypothesis $\lambda(G)\geq k$ in Corollary \ref{thm:main} is necessary.

\begin{figure}
\begin{center}
\includegraphics[width=0.9\textwidth]{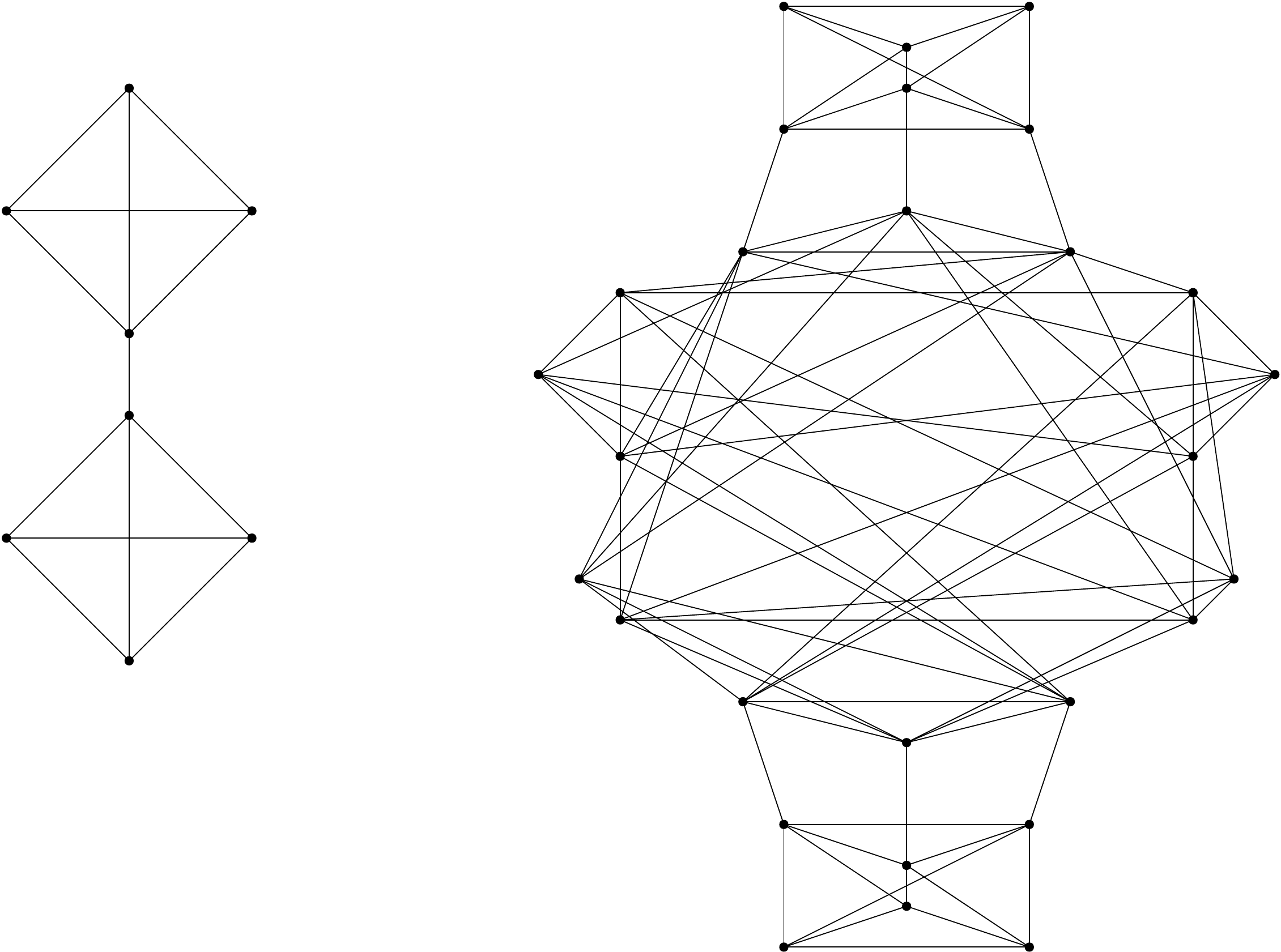}
\caption{\small  The graph on the right $F_2(G)$ is the $2$-token graph of the graph $G$ on the left. They show that the hypothesis $\lambda(G)\geq k$ in  Corollary \ref{thm:example} is necessary. Note that $\lambda(G)=1, \delta(G)=3,\lambda(F_2(G))=3$, and $\delta(F_2(G))=4$.}
\label{fig:graphG}
\end{center}
\end{figure}

\section*{Acknowledgements}
We thank A. L. Trujillo-Negrete for helpful discussions and by providing the example in Figure \ref{fig:graphG}.





\end{document}